\documentclass[a4paper,11pt]{amsart}

\usepackage{amsmath,amsthm,amsfonts,amssymb}
\usepackage[bookmarks=false,colorlinks=true,
  linkcolor=blue,citecolor=blue,urlcolor=blue]{hyperref}
\usepackage{jesse-layout}

\newcommand{\SL}{\operatorname{SL}}
\newcommand{\bB}{\mathbb B}
\newcommand{\bK}{\mathbb K}
\newcommand{\bS}{\mathbb S}
\newcommand{\cU}{\mathcal U}

\theoremstyle{plain}
\newtheorem{theorem}{Theorem}[section]
\newtheorem{proposition}[theorem]{Proposition}
\newtheorem{lemma}[theorem]{Lemma}

\theoremstyle{definition}

\title[An ICC property-\texorpdfstring{$(T)$}{(T)} group]
{An ICC Group with Property \texorpdfstring{$(T)$}{(T)}\\
That Is Not Properly Proximal}
\author{Yanyu Wang}
\address{Department of Mathematics, Vanderbilt University, 1326 Stevenson
Center, Nashville, TN 37240, USA}
\email{yanyu.wang@vanderbilt.edu}
\date{}
\hypersetup{pdftitle={An ICC Group with Property (T) That Is Not Properly Proximal}}

\begin{document}
\begin{abstract}
We exhibit explicit ICC subgroups of
$\SL_7(\mathbb Q)$ with Kazhdan's property $(T)$ that is not properly
proximal.
\end{abstract}

\maketitle

\section{Introduction}\label{sec:introduction}

Proper proximality was introduced by Boutonnet, Ioana, and Peterson
\cite{BIP} to extend boundary methods in the study of group von Neumann
algebras beyond the setting of biexact groups. A principal motivation
was to study rigidity properties of the group von Neumann algebras
associated with higher-rank lattices, such as $\SL_d(\mathbb Z)$ for
$d\geq3$. These lattices are properly proximal
\cite[Proposition~4.14]{BIP} and have Kazhdan's property $(T)$
\cite{BHV}. This suggests asking whether a group with property $(T)$
can fail to be properly proximal.

Groups with property $(T)$ and infinite center provide easy examples
\cite{Cornulier2007}: they are inner amenable and hence are not properly
proximal \cite[Proposition~4.11]{BIP}. More generally, one can ask whether
inner amenability is the only obstruction to proper proximality.
Ishan, Peterson, and Ruth \cite{IPR} answered this negatively: they
proved that proper proximality is invariant under measure equivalence
and used this to construct non-inner amenable groups that are not
properly proximal.

ICC groups with property $(T)$ are non-inner amenable
\cite[Theorem~11]{AW}. It is therefore natural to ask whether there
exists an ICC group with property $(T)$ that is not properly proximal.
Peterson posed this question in his 2020 COSy talk notes
\cite[p.~6]{Peterson2020} and in his 2026 ICM proceedings paper
\cite[p.~756]{Peterson2026}. We give an affirmative answer with the
following explicit matrix groups.

\begin{theorem}\label{thm:main}
Fix a prime $p$, put $R=\mathbb Z[1/p]$, and let
\[
\Gamma=
\left\{
\begin{pmatrix}
h&a&z\\
0&k&y\\
0&0&1
\end{pmatrix}
:
\begin{array}{l}
h\in\SL_3(\mathbb Z),\quad k\in\SL_3(R),\\
a\in M_3(R),\quad y,z\in R^3
\end{array}
\right\}.
\]
Then $\Gamma$ is countable, has Kazhdan's property $(T)$, is ICC, and is not properly proximal.
\end{theorem}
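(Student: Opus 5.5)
The plan is to view $\Gamma$ as a semidirect product $\Gamma=N\rtimes(\SL_3(\mathbb Z)\times\SL_3(R))$. Here $N$ is the $2$-step nilpotent group of unipotent matrices with entries $(a,y,z)$. The two diagonal blocks act on $N$ by
\[
a\mapsto hak^{-1},\qquad y\mapsto ky,\qquad z\mapsto hz,
\]
and the $z$-coordinate $Z\cong R^3$ is central in $N$ with $[a,y]$ landing in $Z$. Countability is immediate, since every entry lies in the countable ring $R$. I would establish the other three properties in turn: property $(T)$ by an extension argument, ICC by a case analysis on the diagonal blocks, and failure of proper proximality by exhibiting an invariant boundary state along $Z$.

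\emph{Property $(T)$.} The quotient $\SL_3(\mathbb Z)\times\SL_3(R)$ has $(T)$. The first factor is a higher-rank lattice. The second is an irreducible lattice in $\SL_3(\mathbb R)\times\SL_3(\mathbb Q_p)$, which has $(T)$. It therefore suffices to prove relative property $(T)$ for the pair $(\Gamma,N)$ and then use the standard fact that $(T)$ passes to extensions when the quotient has $(T)$ and the normal subgroup has relative $(T)$.
\begin{itemize}
\item For $Y\cong R^3$, relative $(T)$ of $(\SL_3(R)\ltimes R^3,R^3)$ is the classical Kazhdan--Margulis--Burger argument. Concretely, there is no $\SL_3(R)$-invariant mean on the Pontryagin dual $\widehat{R^3}\setminus\{0\}$, which is a solenoid; this can be reduced to the real and $p$-adic cases.
\item The subgroup $A\cong M_3(R)$ is three copies of $R^3$ under the right action of $\SL_3(R)$, so the same argument gives relative $(T)$ for $A$.
\item For $Z$ I would not use $\SL_3(\mathbb Z)$ directly. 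Instead, given a representation with almost invariant vectors, I first obtain a nearby vector that is invariant under both $A$ and $Y$. Such a vector is then invariant under the commutators $[A,Y]$, and the products $ay$ span $R^3$, so these commutators generate $Z$. Hence the vector is $N$-invariant.
\end{itemize}
To keep the approximation uniform, I would rather run this as: invariant vectors for $A$ and $Y$ separately exist close to the original vector, and projections onto the commuting-up-to-$Z$ invariant subspaces compose with controlled error.

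\emph{ICC.} I would show directly that $Z(\Gamma)$ is trivial and that every nontrivial $g$ has infinite conjugacy class, splitting into cases.
\begin{itemize}
\item If the diagonal part $(h,k)$ of $g$ is nontrivial, conjugating by $N$ or by the diagonal blocks already gives infinitely many conjugates. For instance, conjugating by $y$ changes the $y$-entry by $(k-1)y$, and if $k=1$ but $h\neq 1$ the $z$-entry moves through $(h-1)R^3$.
\item If $g\in N$ is nontrivial, conjugating by $\SL_3(\mathbb Z)$ or $\SL_3(R)$ moves $a$, $y$ or $z$ through infinite orbits, because these groups have no nonzero fixed vectors in $R^3$.
\end{itemize}

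\emph{Not properly proximal.} This is the main obstacle. The key structural feature is that $Z$ is an infinite abelian normal subgroup on which the large factor $\SL_3(R)$ acts trivially: $k$ does not touch $z$, and $N$ centralizes $Z$. Only $\SL_3(\mathbb Z)$ acts on $Z$, and it preserves the lattice $\mathbb Z^3\subset R^3$. My plan is to build a state on $\ell^\infty\Gamma/c_0\Gamma$ that is invariant under the left action and is evaluated on the right-invariant elements, as in the definition in \cite{BIP}.
\begin{itemize}
\item I would concentrate it on sequences in $Z$ escaping to infinity in the $p$-adic direction, such as Følner-type averages over the balls $p^{-n}\mathbb Z^3\subset R^3$. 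These are invariant under the $\SL_3(\mathbb Z)$-action, since $\SL_3(\mathbb Z)$ preserves each $p^{-n}\mathbb Z^3$.
\item They are also asymptotically invariant under translation, as $Z$ is amenable. They are exactly invariant under conjugation by $\SL_3(R)\ltimes N$, since these elements centralize $Z$.
\end{itemize}
A weak$^*$ limit of these averages should then be an invariant state on the relevant boundary algebra. Verifying that it is compatible with the right action modulo $c_0$ is the step I expect to require care, possibly via the relative characterisation of proper proximality in \cite{BIP} or \cite{IPR}. I would first try the BIP criterion that relates such states to a relatively inner-amenable subgroup with large centraliser.
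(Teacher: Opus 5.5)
Your countability, property $(T)$ and ICC arguments are sound. The $(T)$ part is essentially Cornulier's proof, which the paper cites for the subgroup $G$ with $h=I_3$ before extending by $\SL_3(\mathbb Z)$. To make your $Z$-step rigorous, use the uniform form of relative $(T)$. Each $z\in R^3$ is a single commutator $[n(a,0,0),n(0,e_1,0)]$, where $a$ has first column $z$. So $\varepsilon$-invariance on $A\cup Y$ gives $4\varepsilon$-invariance on $Z$, and $6\varepsilon$-invariance on $N=AYZ$. Your case analysis for ICC is a valid alternative to the paper's finite-index-centralizer argument.

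\textbf{The gap: non-proper proximality.} Here the proposed state cannot exist. Take a weak$^*$ limit of means on $Z\setminus\{0\}$ that are exactly invariant under conjugation by $\SL_3(\mathbb Z)$ and by $\SL_3(R)\ltimes N$, and asymptotically invariant under translation by the infinite group $Z$. The limit is a diffuse mean on $\ell^\infty\Gamma$ invariant under conjugation by all of $\Gamma$. That is precisely inner amenability of $\Gamma$, which is impossible because $\Gamma$ is ICC with property $(T)$ \cite[Theorem~11]{AW}; this is the whole reason the problem is nontrivial.

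Concretely, the sets $p^{-n}\mathbb Z^3$ are infinite lattices, so there are no finite averages over them. In fact $\SL_3(\mathbb Z)$ has no invariant mean on $R^3\setminus\{0\}$ at all. Such a mean would give almost invariant unit vectors in $\ell^2(R^3\setminus\{0\})$, and property $(T)$ would then give a nonzero invariant $\ell^2$-vector. That is impossible because every orbit is infinite, as your own ICC argument shows.

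The step you flag as delicate is not where the trouble lies. For a diffuse mean, left invariance on functions that are right-invariant modulo $c_0\Gamma$ reduces exactly to conjugation invariance. So any construction whose output is a conjugation-invariant diffuse mean is ruled out, and your vague fallback to a relative inner-amenability criterion does not escape this.

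\textbf{What is missing.} The obstruction is only visible in the von Neumann algebraic boundary of \cite{DKEP}. The paper shows that the dual action $\SL_3(\mathbb Z)\curvearrowright L(Z)\cong L^\infty(\widehat{R^3})$ on the solenoid is not properly proximal:
\begin{enumerate}
\item[(i)] $R^3\rtimes\SL_3(\mathbb Z)$ is a lattice in $(\mathbb R^3\rtimes\SL_3(\mathbb R))\times(\mathbb Q_p^3\rtimes\SL_3(\mathbb Z_p))$.
\item[(ii)] The $p$-adic factor is noncompact, amenable and unimodular, hence measure equivalent to $\mathbb Z$ \cite{KKR}.
\item[(iii)] So $R^3\rtimes\SL_3(\mathbb Z)\sim_{\mathrm{ME}}(\mathbb Z^3\rtimes\SL_3(\mathbb Z))\times\mathbb Z$, which has infinite center.
\item[(iv)] ME invariance \cite{IPR}, proper proximality of $\SL_3(\mathbb Z)$, and \cite[Theorems~6.4 and~8.12]{DKEP} then give non-proper proximality of the action.
\end{enumerate}
This is where your ``$p$-adic direction'' intuition legitimately enters: through the amenable group $\mathbb Q_p^3\rtimes\SL_3(\mathbb Z_p)$, not through $\SL_3(\mathbb Z)$-invariant subsets of $R^3$.

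One then needs the transfer mechanism of Proposition~\ref{prop:central-semidir-obstruction}:
\begin{enumerate}
\item[(i)] An invariant, $L(Z)$-central state on $\bS(L(Z))$ vanishing on $\bK^{\infty,1}(L(Z))$ is compressed through $e_{L(Z)}$ to a state on $\bS(L(G))$.
\item[(ii)] That state is $L(G)$-central, because centrality of $Z$ in $G$ gives $L(G)=W^*(L(Z),L(Z)'\cap L(G))$.
\item[(iii)] The argument of \cite[Proposition~8.11]{DKEP} then produces a central state on $\bS(L(\Gamma))$ that is normal on $L(\Gamma)$, so $L(\Gamma)$, and hence $\Gamma$, is not properly proximal.
\end{enumerate}
The resulting obstruction is not a conjugation-invariant mean on $\Gamma$. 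That is exactly how it coexists with the non-inner-amenability forced by ICC and property $(T)$.
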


Since discrete groups with property $(T)$ are finitely generated, our
examples are non-inner amenable finitely generated subgroups of
$\SL_7(\mathbb Q)$ that are not properly proximal. Thus
Theorem~\ref{thm:main} also answers negatively the question of
Boutonnet, Ioana, and Peterson asking whether every non-inner amenable
finitely generated linear group is properly proximal
\cite[Question~2(3)]{BIP}.

The failure of proper proximality will follow from the next observation,
which combines the state-compression arguments of
\cite[Section~8]{DKEP}.

\begin{proposition}\label{prop:central-semidir-obstruction}
Let $H\curvearrowright G$ be an action by automorphisms of countable
discrete groups, and let $W\leq G$ be an infinite $H$-invariant subgroup.
Assume that
\[
L(G)=W^*\!\left(L(W),L(W)'\cap L(G)\right).
\]
If the induced action $H\curvearrowright L(W)$ is not properly
proximal, then $G\rtimes H$ is not properly proximal.
\end{proposition}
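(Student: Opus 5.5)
The plan is to use the von Neumann-algebraic characterization of proper proximality from \cite{DKEP}. A countable group $\Lambda$ fails to be properly proximal exactly when there is a state on the boundary algebra $\bS(\Lambda)\subset B(\ell^2\Lambda)$ that is invariant under the conjugation action of $\Lambda$. Equivalently, there is a state on $\bS(L(\Lambda))$ that is $L(\Lambda)$-central and restricts to a normal state on $L(\Lambda)$. Likewise, the action $H\curvearrowright L(W)$ fails to be properly proximal exactly when there is an $H$-invariant, $L(W)$-central state $\varphi$ on the $H$-equivariant boundary algebra $\bS(L(W))$ whose restriction to $L(W)$ is normal. Fix such a $\varphi$. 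I then aim to manufacture from $\varphi$ a state $\psi$ on $\bS(L(G\rtimes H))$ that is central for $L(G\rtimes H)=W^*(L(G),\{u_h\}_{h\in H})$ and normal on $L(G\rtimes H)$.

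\textbf{Step 1 (compression).} I will build a u.c.p.\ map $\Phi\colon B(\ell^2(G\rtimes H))\to B(\ell^2W)$ with three properties:
\begin{itemize}
\item it is equivariant for the $H$-actions and for the left and right $L(W)$-bimodule structures;
\item it sends compact operators, and more generally operators that are small at infinity relative to $G\rtimes H$, into $\bS(L(W))$;
\item it restricts to the trace-preserving conditional expectation $E_{L(W)}$ on $L(G\rtimes H)$.
\end{itemize}
The natural candidate is the state-compression of \cite[Section~8]{DKEP}, namely $T\mapsto e_W T e_W$ with $e_W$ the projection onto $\ell^2W$, possibly averaged over coset representatives. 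Infiniteness of $W$ is what makes compact operators go to compacts and not to something nonzero in the limit. I then set $\psi=\varphi\circ\Phi$. By construction $\psi$ is normal on $L(G\rtimes H)$, being $\tau\circ E_{L(W)}$-like there.

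\textbf{Step 2 (centrality).} Let $\mathcal C$ be the set of unitaries $u\in L(G\rtimes H)$ with $\psi(uTu^*)=\psi(T)$ for all $T\in\bS$. Because $\psi$ is normal on $L(G\rtimes H)$, and also on the right copy $JL(G\rtimes H)J$, a standard Cauchy--Schwarz estimate shows that $\mathcal C$ is strongly closed. Since $\mathcal C$ is a group, the unitaries in $\mathcal C$ then generate a von Neumann algebra whose unitary group is $\mathcal C$. It therefore suffices to place three classes of unitaries in $\mathcal C$:
\begin{itemize}
\item the unitaries of $L(W)$, which lie in $\mathcal C$ by $L(W)$-bimodularity of $\Phi$ and centrality of $\varphi$;
\item the unitaries $u_h$ for $h\in H$, which lie in $\mathcal C$ by $H$-equivariance of $\Phi$ and $H$-invariance of $\varphi$;
\item the unitaries of $L(W)'\cap L(G)$.
\end{itemize}
The hypothesis $L(G)=W^*(L(W),L(W)'\cap L(G))$ then gives all of $L(G)$, and together with the $u_h$ all of $L(G\rtimes H)$.

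\textbf{Main obstacle.} The difficulty is the third class: unitaries $v\in L(W)'\cap L(G)$ do not preserve $\ell^2W$, so $\Phi(vTv^*)$ is not simply a conjugate of $\Phi(T)$. My first attempt would use Cauchy--Schwarz in the form $|\psi(vT)-\psi(Tv)|^2\le \psi(\cdot)\,\|v-\text{(approximant)}\|_{2}^2$ to reduce to how $\varphi$ sees $E_{L(W)}(v)$. The point is that $E_{L(W)}$ maps $L(W)'\cap L(G)$ into $\mathcal Z(L(W))$, so $v$ behaves, from the point of view of $\psi$, like an element commuting with everything that $\psi$ can detect. If the direct compression is insufficient, I would instead first produce a $G$-conjugation-invariant state on $\bS(G)$, using the $L(W)$-bimodular structure of $\ell^2G$ over $L(W)$ and the fact that $v$ commutes with $L(W)$ to pass $v$ through the bimodule decomposition. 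I would then extend it to $G\rtimes H$ by $H$-invariance, following \cite[Section~8]{DKEP}.
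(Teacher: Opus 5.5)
Your overall architecture matches the paper's: compress a suitable state on $\bS(B)$, $B=L(W)$, by $e_B$, check centrality on generating sets, and use $L(G)=W^*(B,B'\cap L(G))$. But the step you label the main obstacle is a genuine gap, and the fixes you suggest would fail. Approximating $v\in B'\cap L(G)$ by elements of $B$, or using $E_B(v)\in Z(B)$, breaks down already for $v=u_g$ with $g\in C_G(W)\setminus W$ (for example any $g\in G\setminus W$ when $W$ is central, as in the application). There $E_B(u_g)=0$ and $\|u_g-a\|_2\geq 1$ for every $a\in B$, so nothing in $B$ approximates $v$. Your alternative via a bimodule decomposition of $\ell^2G$ is too vague to supply a mechanism. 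The missing idea is that such a $v$ agrees with $Jv^*J$ on $L^2(B)$: $v\hat b=\widehat{vb}=\widehat{bv}=Jv^*J\hat b$. Hence $ve_B=Jv^*Je_B$, and, taking adjoints and replacing $v$ by $v^*$, also $e_Bv=e_BJv^*J$. Consequently
\[
e_BTve_B-e_BvTe_B=e_B[T,Jv^*J]e_B ,
\]
where $[T,Jv^*J]\in\bK^{\infty,1}$ by the very definition of $\bS$, and compression by $e_B$ maps $\bK^{\infty,1}$ into $\bK^{\infty,1}(B)$. So $\psi(Tv)=\psi(vT)$, \emph{provided} $\varphi$ vanishes on $\bK^{\infty,1}(B)$. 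This is the compression argument from the proof of \cite[Theorem~8.8]{DKEP} that the paper uses.

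That proviso is the second missing ingredient. You characterize non-proper proximality of $H\curvearrowright B$ only by an invariant, central, normal state on $\bS(B)$. The definition, however, has two alternatives, and in alternative (A) (weak compactness on an invariant corner $Bp$) nothing forces the resulting state to vanish on $\bK^{\infty,1}(B)$. This is where infiniteness of $W$ really enters, not in sending compacts to compacts. Since $B$ is diffuse, the weak-compactness state on $\bB(pL^2(B))$ vanishes on compacts: its restriction there has a compact density lying in $(Bp)'$, which must be $0$. Then \cite[Theorem~5.6]{DKEP}, using normality on $Bp$ and $J(Bp)J$, upgrades this to vanishing on $\bK^{\infty,1}$, and compressing by $p$ yields a state as in (B); this is Lemma~\ref{lem:pp-diffuse-state}.

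The same vanishing is needed for your second class. In $\ell^2(G\rtimes H)$ the unitary $u_h$ carries $\ell^2W$ onto $\ell^2(hW)\perp\ell^2W$, so $T\mapsto e_BTe_B$ is not literally $H$-equivariant. One only has $u_he_B=Ju_h^*JU_he_B$ with $U_h=u_hJu_hJ$ commuting with $e_B$, so invariance holds only modulo $e_B\bK^{\infty,1}e_B$. The paper avoids this by building $\psi$ on $\bS(L(G))$, where the Koopman unitaries commute with $e_B$, and then passing to $L(G)\rtimes H$ through the proof of \cite[Proposition~8.11]{DKEP}. Your one-step route through $\bS(L(G\rtimes H))$ can be made to work, but only after securing vanishing on $\bK^{\infty,1}(B)$ and applying the $J$-trick to both $v$ and $u_h$.
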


We use proper proximality of actions in the sense of \cite{DKEP}.
The generation hypothesis holds whenever $G=W C_G(W)$, where $C_G(W)$
is the centralizer of $W$ in $G$. In particular, it is automatic if
$W\leq Z(G)$. In our example, $W$ is central in the property $(T)$ group
$G$, while the action of $H$ makes $G\rtimes H$ ICC and retains the
obstruction to proper proximality.

\medskip
\noindent\textbf{AI use statement.} The construction underlying the main result of this paper was found mainly
by GPT-5.6 Sol, under the author's mathematical guidance. The resulting
material was subsequently reorganized, revised, and checked by the author.
All mathematical statements, proofs, references, and final editorial
decisions remain the sole responsibility of the author.

\medskip
\noindent\textbf{Acknowledgements.}
The author would like to thank Changying Ding for suggesting this problem and Jesse Peterson for helpful discussions and valuable comments on an earlier draft of this paper.

\section{Proof of the main theorem}\label{sec:proof}

We first establish the state criterion needed for
Proposition~\ref{prop:central-semidir-obstruction}. We use the
terminology of \cite{BIP,DKEP}. Let $(P,\tau)$ be a separable finite
von Neumann algebra with a normal faithful tracial state, and let
$J_P$ denote the canonical conjugation on $L^2(P)$. For
$T\in\bB(L^2(P))$, put
\[
\|T\|_{\infty,1;P}
=\sup_{a,b\in(P)_1}
\bigl|\langle T\widehat a,\widehat b\rangle\bigr|,
\qquad
\bK^{\infty,1}(P)
=\overline{\bK(L^2(P))}^{\|\cdot\|_{\infty,1;P}},
\]
where the closure is taken inside $\bB(L^2(P))$, and define
\[
\bS(P)=\{T\in\bB(L^2(P)):
[T,J_PxJ_P]\in\bK^{\infty,1}(P)\text{ for every }x\in P\}.
\]
The algebra $P$ is properly proximal if $\bS(P)$ has no $P$-central
state whose restriction to $P$ is normal
\cite[Theorem~6.2 and the subsequent definition]{DKEP}.
Centrality means $\varphi(xT)=\varphi(Tx)$ for $x\in P$ and
$T\in\bS(P)$.

Let a countable group $\Lambda$ act on $(P,\tau)$ by trace-preserving
automorphisms $\alpha_s$, implemented by the Koopman unitaries
$U_s\widehat x=\widehat{\alpha_s(x)}$. The action is weakly compact
if there is a $\Lambda$-invariant, $P$-central state $\Omega$ on
$\bB(L^2(P))$ with $\Omega|_P=\tau$, where invariance means
$\Omega(U_sTU_s^*)=\Omega(T)$. Following \cite[Section~8]{DKEP},
the action is properly proximal if neither of the following holds:
\begin{enumerate}
\item[(A)] there is a nonzero $\Lambda$-invariant projection
      $p\in Z(P)$ such that the action on
      $(Pp,\tau(p)^{-1}\tau|_{Pp})$ is weakly compact;
\item[(B)] there is a $\Lambda$-invariant, $P$-central state
      $\varphi$ on $\bS(P)$ such that $\varphi|_P$ is normal and
      $\varphi|_{\bK^{\infty,1}(P)}=0$.
\end{enumerate}

\begin{lemma}\label{lem:pp-diffuse-state}
Let $\Lambda\curvearrowright(B,\tau)$ be a trace-preserving action
on a separable diffuse finite von Neumann algebra. The action is
not properly proximal if and only if there is a
$\Lambda$-invariant, $B$-central state $\varphi$ on $\bS(B)$ such
that $\varphi|_B$ is normal and
$\varphi|_{\bK^{\infty,1}(B)}=0$.
\end{lemma}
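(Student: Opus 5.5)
The statement says that for diffuse $B$, condition (A) is subsumed by condition (B). So the plan is to prove one direction trivially and reduce the other to showing that (A) implies (B). If (B) holds, the action is not properly proximal by definition. Conversely, if the action is not properly proximal, then (A) or (B) holds, and if (B) holds we are done. So the whole task is: given a nonzero $\Lambda$-invariant central projection $p\in Z(B)$ with $\Lambda\curvearrowright Bp$ weakly compact, produce a state as in (B) on $\bS(B)$.

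\emph{Step 1 (upgrade the weakly compact state).} Let $\Omega$ be a $\Lambda$-invariant, $Bp$-central state on $\bB(L^2(Bp))$ with $\Omega|_{Bp}=\tau_p:=\tau(p)^{-1}\tau|_{Bp}$. I want it to vanish on compact operators. Since $Bp$ is diffuse, choose a sequence of partitions of unity in $Bp$ by projections $e_{n,1},\dots,e_{n,k_n}$ with $\tau_p(e_{n,i})\to0$ uniformly. Compose $\Omega$ with the unital completely positive maps $T\mapsto\sum_i e_{n,i}Te_{n,i}$ and take a weak$^*$ limit point along an ultrafilter. Centrality gives $\Omega(\sum_i e_iTe_i)=\Omega(T)$ for every $T$ and every finite partition, so $\Omega$ itself is unchanged. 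It then suffices to check that $\Omega$ kills compacts. For a rank-one operator $\theta_{\xi,\eta}$, Cauchy--Schwarz and normality on $Bp$ give
\[
|\Omega(\theta_{\xi,\eta})|=\Bigl|\sum_i\Omega(e_i\theta_{\xi,\eta}e_i)\Bigr|\le\sum_i\Omega(e_i\theta_{\xi,\xi}e_i)^{1/2}\Omega(e_i\theta_{\eta,\eta}e_i)^{1/2}.
\]
Bounding $\Omega(e_i\theta_{\xi,\xi}e_i)\le\|e_i\xi\|^2\,\Omega(q_i)$ is not quite clean, so the alternative is to use the $\|\cdot\|_{\infty,1}$-structure directly. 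The expected outcome is still that the weakly compact $\Omega$ vanishes on $\bK(L^2(Bp))$ when $Bp$ is diffuse. The cleaner route is that a $Bp$-central state restricting to a normal trace on a diffuse algebra cannot see finite-rank operators. Indeed, for a finite-rank projection $q$ and a projection $e$ of small trace, $\Omega(q)\approx\Omega(q\,\cdot)$ can be estimated via $\Omega(q)=\sum_i\Omega(e_iqe_i)$ and $e_iqe_i\le\|qe_i\|^2$-type bounds. Those bounds tend to $0$ because $\sum_i\|qe_i\|^2_{HS}=\|q\|_{HS}^2$ is fixed while the individual terms spread out. I would then extend from $\bK$ to $\bK^{\infty,1}(Bp)$ by the inequality $|\Omega(T)|\le\|T\|_{\infty,1}$. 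That inequality should follow from centrality and $\Omega|_{Bp}=\tau_p$ by writing $\Omega$ as a limit of vector-like states $\langle T\widehat{a},\widehat{b}\rangle$; this is the standard fact used in \cite{DKEP} that $B$-central states normal on $B$ are $\|\cdot\|_{\infty,1}$-bounded.

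\emph{Step 2 (transport to $L^2(B)$).} Define $\varphi(T)=\Omega(pTp)$ for $T\in\bS(B)$, where $p=pJpJ$ is viewed as the projection onto $L^2(Bp)$. Because $p$ is central and $\Lambda$-invariant, $T\mapsto pTp$ intertwines the Koopman actions, maps $\bK^{\infty,1}(B)$ into $\bK^{\infty,1}(Bp)$, and satisfies $p(xT)p=(xp)(pTp)$. Hence $\varphi$ is a $\Lambda$-invariant, $B$-central state, $\varphi|_B(x)=\tau_p(xp)$ is normal, and $\varphi$ vanishes on $\bK^{\infty,1}(B)$. This is exactly (B).

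The main obstacle is Step 1: proving that the weakly compact state automatically annihilates $\bK^{\infty,1}(Bp)$. My first attempt there will be the partition-of-unity averaging, combined with the $\|\cdot\|_{\infty,1}$ bound for central states normal on the algebra.
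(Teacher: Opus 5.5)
\textbf{There is a genuine gap: the step from $\bK$ to $\bK^{\infty,1}(Bp)$.}

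Several parts of your argument are fine. The reduction to (A)$\Rightarrow$(B) is correct. Your vanishing argument on $\bK(L^2(Bp))$ also works once cleaned up: centrality gives
$\Omega(q)=\sum_i\Omega(e_iqe_i)\le\sum_i\|qe_i\|^2\tau_p(e_i)\le\max_i\tau_p(e_i)\operatorname{rank}(q)$.
This is an elementary substitute for the paper's argument that the density of $\Omega|_{\bK}$ is a compact operator in $(Bp)'$, hence zero. The compression in Step~2 is also fine.

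The problem is the inequality $|\Omega(T)|\le\|T\|_{\infty,1}$ for central states normal on the algebra, which you use to extend from $\bK$ to $\bK^{\infty,1}(Bp)$. It is false. Since $\langle JyJ\widehat a,\widehat b\rangle=\tau(b^*ay^*)$, a projection $r$ has $\|JrJ\|_{\infty,1}=\tau(r)$. Nothing in weak compactness forces $\Omega|_{JQJ}$ to be normal.

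Worse, the conclusion of your Step~1 fails for an arbitrary witness. Here is a counterexample.
\begin{itemize}
\item Take $\Lambda$ trivial and $Q=R=\overline{\bigotimes}_{k\ge1}M_2$.
\item Let $E_n$ be the projection onto $L^2(M_2^{\otimes n})$, and set $q^{(n)}=e_{11}^{\otimes n}$, $q_k=e_{11}^{\otimes(k-1)}\otimes e_{22}$, and $\rho_n=2^{-n}E_nJq^{(n)}J$.
\item Each $\rho_n$ is a density operator commuting with $M_2^{\otimes n}$, and $\mathrm{Tr}(x\rho_n)=\tau(x)$ for $x\in R$. So any cluster point $\Omega$ of $\mathrm{Tr}(\,\cdot\,\rho_n)$ is an $R$-central state with $\Omega|_R=\tau$, and it kills compacts.
\item Yet $\Omega(Jq^{(N)}J)=1$ for every $N$.
\item The projection $T=\sum_kE_kJq_kJ$ lies in $\bK^{\infty,1}(R)$. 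Its tail beyond $N$ lives under $Jq^{(N)}J$, so that tail has $\|\cdot\|_{\infty,1}$-norm at most $2^{-N/2}$.
\item But $\mathrm{Tr}(T\rho_n)=2^{-n}\sum_{k>n}2^{2n-k}=1$ for all $n$, so $\Omega(T)=1$.
\end{itemize}

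\textbf{The missing idea.} You cannot use the given weakly compact state. You must produce one that is also normal on $J_QQJ_Q$. Only then does \cite[Theorem~5.6]{DKEP}, which needs normality on both $Q$ and $J_QQJ_Q$, upgrade vanishing on $\bK$ to vanishing on $\bK^{\infty,1}(Q)$.

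The paper gets this from the Ozawa--Popa characterization of weak compactness. The almost $(v\otimes\overline v)$- and $(U_s\otimes\overline{U_s})$-invariant unit vectors $\eta_i$ can be taken in the positive cone of $L^2(Q\,\overline\otimes\,\overline Q)$. They are therefore fixed by $J_Q\otimes J_{\overline Q}$. Consequently the cluster point of $\langle(T\otimes1)\eta_i,\eta_i\rangle$ is tracial on both $Q$ and $J_QQJ_Q$.
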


\begin{proof}
Only the implication from condition (A) to condition (B) requires
proof. Choose a nonzero invariant projection $p\in Z(B)$ for which
the action on $Q=Bp$, equipped with
$\tau_Q=\tau(p)^{-1}\tau|_Q$, is weakly compact.
By \cite[Definition~3.1 and Proposition~3.2]{OP}, there is a net of unit
vectors
$\eta_i\in L^2(Q\,\overline\otimes\,\overline Q)_+$ such that
\[
\|(v\otimes\overline v)\eta_i-\eta_i\|_2\longrightarrow0,
\qquad
\|(U_s\otimes\overline{U_s})\eta_i-\eta_i\|_2\longrightarrow0,
\]
for $v\in\cU(Q)$ and $s\in\Lambda$, and
$\langle(x\otimes1)\eta_i,\eta_i\rangle=\tau_Q(x)$ for $x\in Q$.
Here $U_s$ denotes the Koopman unitary on $L^2(Q)$.
Let $\Omega$ be a weak$^*$ cluster point of the states
\[
\Omega_i(T)=\langle(T\otimes1)\eta_i,\eta_i\rangle,
\qquad T\in\bB(L^2(Q)).
\]
The two invariance conditions make $\Omega$ $Q$-central and
$\Lambda$-invariant. Since
$J_Q\otimes J_{\overline Q}$ fixes each $\eta_i$, we also have
\[
\Omega_i(J_Qx^*J_Q)
=\overline{\Omega_i(x^*)}
=\tau_Q(x)
\qquad(x\in Q).
\]
Thus $\Omega$ restricts to the canonical normal traces on both
$Q$ and $J_QQJ_Q$.

The restriction of $\Omega$ to $\bK(L^2(Q))$ is represented by a
positive trace-class operator $D$. Centrality gives $D\in Q'$.
Diffuseness provides a sequence of unitaries $v_n\in\cU(Q)$ converging to $0$ weakly.
Since $D$ is compact and lies in $Q'$, we obtain
\[
\|D\xi\|_2=\|Dv_n\xi\|_2\longrightarrow0
\qquad(\xi\in L^2(Q)).
\]
Hence $D=0$, so $\Omega$ vanishes on $\bK(L^2(Q))$. Its normality
on $Q$ and $J_QQJ_Q$ now allows us to apply
\cite[Theorem~5.6]{DKEP}, giving
$\Omega|_{\bK^{\infty,1}(Q)}=0$.

Identifying \(L^2(Q,\tau_Q)\) with \(pL^2(B,\tau)\), the compression argument in [DKEP23, Lemma 8.7] (see also Remark 6.3 there) shows that \(T\mapsto pTp\) maps \(\mathbb K^{\infty,1}(B)\) into \(\mathbb K^{\infty,1}(Q)\) and \(\mathbb S(B)\) into \(\mathbb S(Q)\).

The state
\[
\varphi(T)=\Omega(pTp),
\qquad T\in\bS(B),
\]
is $B$-central and $\Lambda$-invariant, vanishes on
$\bK^{\infty,1}(B)$, and satisfies
$\varphi(b)=\tau(pb)/\tau(p)$ for $b\in B$. This proves (B).
\end{proof}

\begin{proof}[Proof of Proposition~\ref{prop:central-semidir-obstruction}]
Put $P=L(G)$ and $B=L(W)$. Since $W$ is infinite, $B$ is diffuse.
Lemma~\ref{lem:pp-diffuse-state} gives an $H$-invariant,
$B$-central state $\varphi$ on $\bS(B)$ that is normal on $B$ and
vanishes on $\bK^{\infty,1}(B)$.

Let $e_B:L^2(P)\to L^2(B)$ be the orthogonal projection, and let
$E_B:P\to B$ be the trace-preserving conditional expectation. By
\cite[Lemma~8.7]{DKEP}, the formula
\[
\psi(T)=\varphi(e_BTe_B),
\qquad T\in\bS(P),
\]
defines an $H$-invariant state vanishing on
$\bK^{\infty,1}(P)$, and
$\psi|_P=(\varphi|_B)\circ E_B$ is normal. The compression argument
in the second half of the proof of \cite[Theorem~8.8]{DKEP}
shows that $\psi$ is central over $B$ and $B'\cap P$. As
explicitly observed in the proof of \cite[Theorem~8.10]{DKEP},
the hypothesis $P=W^*(B,B'\cap P)$ therefore makes $\psi$
$P$-central.

Applying the proof of \cite[Proposition~8.11]{DKEP} to $\psi$
gives a $(P\rtimes H)$-central state on $\bS(P\rtimes H)$ whose
restriction to $P\rtimes H$ is normal. Thus
$L(G\rtimes H)\cong P\rtimes H$ is not properly proximal.
By \cite[Theorem~6.4]{DKEP}, neither is $G\rtimes H$.
\end{proof}

\begin{proof}[Proof of Theorem~\ref{thm:main}]
The matrix description makes $\Gamma$ a countable subgroup of
$\SL_7(\mathbb Q)$. Let $G$ be its subgroup defined by $h=I_3$, and
identify $H=\SL_3(\mathbb Z)$ with the subgroup
$\{\operatorname{diag}(h,I_3,1):h\in H\}$. Then $\Gamma=G\rtimes H$.
The group $G$ is the three-block example in
\cite[Definition~2.4 and Remark~2.6]{Cornulier2007}, with
$(n_1,n_2,n_3,n_4)=(3,3,0,1)$ and the empty block omitted.
The proof of \cite[Proposition~2.7]{Cornulier2007}, applied as in
Remark~2.6 there, gives property $(T)$ for $G$.
Since $H$ has property $(T)$ \cite[Example~1.7.4(i)]{BHV}, so does
$\Gamma$ by stability under extensions
\cite[Proposition~1.7.6]{BHV}.

To see that $\Gamma$ is ICC, suppose that $g\in\Gamma$ has a finite
conjugacy class. Its centralizer has finite index, so it contains
$I_7+mE_{ij}$ for some nonzero integer $m$, whenever $i\ne j$ lie
in the same one of the two $3\times3$ diagonal blocks; here $E_{ij}$
are the $7\times7$ matrix units. Thus $gE_{ij}=E_{ij}g$ for all such
$i,j$. Commuting with these matrix units forces
$g=\operatorname{diag}(\lambda I_3,\mu I_3,1)$.
Since $\lambda,\mu\in\mathbb Q$ and
$\lambda^3=\mu^3=1$, we obtain $g=1$.

It remains to prove failure of proper proximality. Put
\[
W_0=\left\{w_z=
\begin{pmatrix}I_3&0&z\\0&I_3&0\\0&0&1\end{pmatrix}
:z\in R^3\right\}.
\]
Then $W_0\cong(R^3,+)$ is infinite and central in $G$.
Conjugation by $h\in H$ sends $w_z$ to $w_{hz}$, so $W_0$ is
$H$-invariant. Under Fourier transform, the action on $L(W_0)$
is the dual action
$\SL_3(\mathbb Z)\curvearrowright\widehat{(\mathbb Z[1/p])^3}$. This action is not properly proximal by an observation in the paragraph following \cite[Prop~8.11]{DKEP}, using Theorem 8.12 there.
Indeed, set
$\Lambda=(\mathbb Z[1/p])^3\rtimes \SL_3(\mathbb Z)$. The diagonal embedding realizes $\Lambda$
as a lattice in
\[
(\mathbb R^3\rtimes\SL_3(\mathbb R))
\times(\mathbb Q_p^3\rtimes\SL_3(\mathbb Z_p)).
\]
The second factor is noncompact, amenable and unimodular, hence is
measure equivalent to $\mathbb Z$ by
\cite[Theorem~4.1]{KKR}. Since $\mathbb Z^3\rtimes \SL_3(\mathbb Z)$ is a lattice
in the first factor, it follows that
\[
\Lambda\sim_{\mathrm{ME}}(\mathbb Z^3\rtimes \SL_3(\mathbb Z))\times\mathbb Z.
\]
The group on the right has infinite center, so it is inner amenable
and not properly proximal \cite[Proposition~4.11]{BIP}.
Measure-equivalence invariance of proper proximality
\cite[Theorem~1.3]{IPR} therefore shows that $\Lambda$ is not
properly proximal. As $H=\SL_3(\mathbb Z)$ is properly proximal
\cite[Proposition~4.14]{BIP},
\cite[Theorems~6.4 and~8.12]{DKEP} imply that
$H\curvearrowright L(W_0)$ is not properly proximal.
The centrality of $W_0$ gives
$L(G)=W^*(L(W_0),L(W_0)'\cap L(G))$, so
Proposition~\ref{prop:central-semidir-obstruction} shows that
$\Gamma$ is not properly proximal.
\end{proof}

\end{document}